
\documentclass[11pt]{amsart}
\usepackage{amsmath}
\usepackage[active]{srcltx}
\usepackage{t1enc}
\usepackage[latin2]{inputenc}
\usepackage{verbatim}
\usepackage{amsmath,amsfonts,amssymb,amsthm}
\usepackage[mathcal]{eucal}
\usepackage{enumerate}
\usepackage[centertags]{amsmath}
\usepackage{graphics}
\usepackage[active]{srcltx}

\setcounter{MaxMatrixCols}{10}

\newtheorem{theorem}{Theorem}

\newtheorem{remark}{Remark}
\newtheorem{definition}{Definition}
\newtheorem{corollary}{Corollary}

\newtheorem*{A}{Theorem A}

\newtheorem*{Sah}{Theorem WS}
\newtheorem*{Bakh}{Theorem B}

\begin{document}
\author{Ushangi Goginava and Artur Sahakian}
\title[Convergence of multiple Fourier series]{On the convergence of
multiple Fourier series of functions of bounded partial generalized variation%
}
\address{U. Goginava, Department of Mathematics, Faculty of Exact and Natural
Sciences, Iv. Javakhishvili Tbilisi State University, Chavcha\-vadze str. 1, Tbilisi 0128,
Georgia}
\email{zazagoginava@gmail.com}
\address{A. Sahakian, Yerevan State University, Faculty of Mathematics and
Mechanics, Alex Manoukian str. 1, Yerevan 0025, Armenia}
\email{sart@ysu.am}
\maketitle

\begin{abstract}
The convergence of multiple Fourier series of functions of bounded partial $%
\Lambda$-variation is investigated. The sufficient and necessary conditions
on the sequence $\Lambda=\{\lambda_n\}$ are found for the convergence of
multiple Fourier series of functions of bounded partial $\Lambda$-variation.
\end{abstract}

\medskip

\footnotetext{%
2010 Mathematics Subject Classification: 42B08
\par
Key words and phrases: Fourier series, Bounded $\Lambda $-variation .}

\section{Classes of Functions of Bounded Generalized Variation}

In 1881 Jordan \cite{Jo} introduced a class of functions of bounded
variation and applied it to the theory of Fourier series. Hereafter this
notion was generalized by many authors (quadratic variation, $\Phi $%
-variation, $\Lambda $-variation ets., see \cite{Jo,Wi,W,M}). In two
dimensional case the class BV of functions of bounded variation was
introduced by Hardy \cite{Ha}.

Let $T:=[0,2\pi ]$ and
\begin{equation*}
J^{k}=\left( a^{k},b^{k}\right)\subset T,\qquad k=1,2,\ldots d.
\end{equation*}
Consider a measurable function $f\left( x\right) $ defined on $R^{d}$ and $%
2\pi$-periodic with respect to each variable. For $d=1$ we set
\begin{equation*}
f\left( J^{1}\right) :=f\left(b^{1}\right) -f\left( a^{1}\right).
\end{equation*}
If for any function of $d-1$ variables the expression $f\left( I^{1}\times
\cdots \times I^{d-1}\right) $ is already defined, then for a function of $d$
variables the \textit{mixed difference} is defined as follows:
\begin{equation*}
f\left( J^{1}\times \cdots \times J^{d}\right) :=f\left( J^{1}\times \cdots
\times J^{d-1},b^{d}\right) -f\left( J^{1}\times \cdots \times
J^{d-1},a^{d}\right) .
\end{equation*}

Let $E=\{I_{k}\}$ be a collection of nonoverlapping intervals from $T$
ordered in arbitrary way and let $\Omega $ be the set of all such
collections $E$. We denote by $\Omega _{n}$ the set of all collections of $n$
nonoverlapping intervals $I_{k}\subset T.$

For sequences of positive numbers $\Lambda^j =\{\lambda^j
_{n}\}_{n=1}^{\infty }$, $j=1,2,\ldots,d$, the $\left( \Lambda
^{1},\ldots,\Lambda ^{d}\right)$-\textit{variation of $f$ with respect to
index set }$D:=\{1,2,...,d\}$ is defined as follows:
\begin{equation*}
V_{ \Lambda ^{1},\ldots,\Lambda ^{d} }^{D}\left( f\right)
:=\sup\limits_{\{I_{i_{j}}^{j} \}_{i_j=1}^{k_j}\in \Omega }\
\sum\limits_{i_{1},...,i_{d}}\frac{\left\vert f\left( I_{i_{1}}^{1}\times
\cdots \times I_{i_{d}}^{d}\right) \right\vert }{\lambda _{i_{1}}\cdots
\lambda _{i_{d}}}.
\end{equation*}

For an index set $\alpha =\{j_{1},...,j_{p}\}\subset D$ and any $%
x=\left(x^{1},...,x^{d}\right)\in R^d $ we set ${\widetilde{\alpha}}%
:=D\setminus \alpha$ and denote by $x^{\alpha }$ the vector of $R^{p}$
consisting of components $x^{j},j\in \alpha $, i.e.
\begin{equation*}
x^{\alpha }=\left(x^{j_1},...,x^{j_p}\right)\in R^p.
\end{equation*}

By $V_{\Lambda ^{j_1},...,\Lambda ^{j_p}}^{{\alpha }}\left( f,x^{\widetilde {%
\alpha}}\right) $ and $f\left(I_{i_{j_1}}^{1}\times \cdots \times
I_{i_{j_p}}^{p},x^{\widetilde \alpha}\right)$ we denote respectively the $%
\left(\Lambda ^{j_1},...,\Lambda ^{j_p}\right)$-variation and the mixed
difference of $f$ as a function of variables $x^{j_{1}},...,x^{j_{p}}$ over
the $p$-dimensional cube $T^{p}$ with fixed values $x^{\widetilde{\alpha}}$
of other variables. The \textit{$\left(\Lambda ^{j_1},...,\Lambda
^{j_p}\right)$-variation of $f$ with respect to index set} ${\alpha }$ is
defined as follows:
\begin{equation*}
V_{\Lambda ^{j_1},...,\Lambda ^{j_p}}^{{\alpha }}\left( f\right)
=\sup\limits_{x^{{\widetilde{\alpha}}}\in T^{d-p}} V_{\Lambda
^{j_1},...,\Lambda ^{j_p}}^{{\alpha }}\left( f,x^{\widetilde{\alpha}
}\right) .
\end{equation*}

\begin{definition}
\label{def1} We say that the function $f$ has total Bounded $\left( \Lambda
^{1},...,\Lambda ^{d}\right) $-variation on $T^{d}=[0,2\pi ]^{d}$ and write $%
f\in BV_{\Lambda ^{1},...,\Lambda ^{d}}$, if
\begin{equation*}
V_{\Lambda ^{1},...,\Lambda ^{d}}(f):=\sum\limits_{\alpha \subset
D}V_{\Lambda ^{j_{1}},...,\Lambda ^{j_{p}}}^{{\alpha }}\left( f\right)
<\infty .
\end{equation*}
\end{definition}

\begin{definition}
\label{def2} We say that the function $f$ is continuous in $\left( \Lambda
^{1},...,\Lambda ^{d}\right) $-variation on $T^{d}=[0,2\pi ]^{d}$ and write $%
f\in CV_{\Lambda ^{1},...,\Lambda ^{d}}$, if%
\begin{equation*}
\lim\limits_{n\rightarrow \infty }V_{\Lambda ^{j_{1}},...,\Lambda
^{j_{k-1}},\Lambda _{n}^{j_{k}},\Lambda ^{j_{k+1}},...,\Lambda ^{j_{p}}}^{{%
\alpha }}\left( f\right) =0,\qquad k=1,2,\ldots,p
\end{equation*}%
for any $\alpha \subset D,\ \alpha :=\{j_{1},...,j_{p}\}$, where $\Lambda
_{n}^{j_{k}}:=\left\{ \lambda _{s}^{j_{k}}\right\} _{s=n}^{\infty }$.
\end{definition}

\begin{definition}
\label{def3} We say that the function $f$ has Bounded Partial $\left(
\Lambda ^{1},...,\Lambda ^{d}\right) $-variation and write $f\in
PBV_{\Lambda ^{1},...,\Lambda ^{d}}$ if
\begin{equation*}
PV_{\Lambda ^{1},...,\Lambda ^{d}}(f):=\sum\limits_{i=1}^{d}V_{\Lambda
^{i}}^{\{i\}}\left( f\right) <\infty .
\end{equation*}
\end{definition}

In the case $\Lambda ^{1}=\cdots =\Lambda ^{d}=\Lambda $ we denote%
\begin{equation*}
BV_{\Lambda }:=BV_{\Lambda^{1},...,\Lambda ^{d}},\quad CV_{\Lambda
}:=CV_{\Lambda ^{1},...,\Lambda^{d}},\quad PBV_{\Lambda }:=PBV_{\Lambda
^{1},...,\Lambda ^{d}}
\end{equation*}%
and%
\begin{equation*}
CV_{\Lambda }:=V_{\Lambda ^{1},...,\Lambda ^{d}}(f)CV_{\Lambda },\qquad
PV_{\Lambda }(f):=PV_{\Lambda ^{1},...,\Lambda ^{d}}(f).
\end{equation*}

If $\lambda _{n}\equiv 1$ (or if $0<c<\lambda _{n}<C<\infty ,\ n=1,2,\ldots $%
) the classes $BV_{\Lambda }$ and $PBV_{\Lambda }$ coincide with the Hardy
class $BV$ and $PBV$ respectively. Hence it is reasonable to assume that $%
\lambda _{n}\rightarrow \infty $ and since the intervals in $E=\{I_{i}\}$
are ordered arbitrarily, we suppose, without loss of generality, that the
sequence $\{\lambda _{n}\}$ is increasing. Thus,
\begin{equation}
1<\lambda _{1}\leq \lambda _{2}\leq \ldots ,\qquad \lim_{n\rightarrow \infty
}\lambda _{n}=\infty .  \label{Lambda}
\end{equation}

When $\lambda _{n}=n$ for all $n=1,2\ldots $ we say \textit{Harmonic
Variation} instead of $\Lambda $-variation and write $H$ instead of $\Lambda
$ ($BV_{H}$, $PBV_{H}$, $CV_H$, ets).

\begin{remark}
The notion of $\Lambda $-variation was introduced by Waterman \cite{W} in
one dimensional case, by Sahakian \cite{Saha} in two dimensional case and by
Sablin \cite{Sab} in the case of higher dimensions. The notion of bounded
partial variation (class $PBV$) was introduced by Goginava in \cite%
{GoEJA,GoJAT}. These classes of functions of generalized bounded variation
play an important role in the theory Fourier series.

Observe, that the number of variations in Definition \ref{def1} of total
variation is $2^d-1$, while the number of variations in Definition \ref{def2}
of partial variation is only $d$.
\end{remark}

The statements of the following theorem are known.

\begin{A}
\label{A} 1) \textrm{(Dragoshanski \cite{Dr})} If $d=2$, then $BV_{H}=CV_{H}$%
.

2) \textrm{(Bakhvalov \cite{Bakh1})} $CV_H=\bigcup_\Gamma BV_\Gamma$ for any
$d$, where the union is taken over all sequences $\Gamma =\{\gamma
_{n}\}_{n=1}^{\infty }$ with $\gamma _{n}=o(n)$ as $n\rightarrow \infty $.

3) \textrm{(Goginava, Sahakian \cite{GogSah})} If $d=2$, then $PBV_{\Lambda
}\subset BV_{H}$, provided that
\begin{equation}  \label{lambda d=2}
\frac{\lambda _{n}}{n}\downarrow 0\quad \mathrm{{and}\quad
\sum\limits_{n=1}^{\infty }\frac{\lambda _{n}}{n^{2}}<\infty ,}
\end{equation}
\end{A}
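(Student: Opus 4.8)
\medskip

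\noindent The plan is to prove statement 3) by controlling, for $d=2$, each of the three summands of the harmonic variation
\[
V_{H}(f)=V_{H}^{\{1\}}(f)+V_{H}^{\{2\}}(f)+V_{H,H}^{\{1,2\}}(f)
\]
by $PV_{\Lambda }(f)=V_{\Lambda }^{\{1\}}(f)+V_{\Lambda }^{\{2\}}(f)$ whenever $f\in PBV_{\Lambda }$ and condition \eqref{lambda d=2} holds. I would first dispose of the two one-dimensional terms: monotonicity of $\lambda _{n}/n$ gives $\lambda _{n}\leq \lambda _{1}n$, hence $1/n\leq \lambda _{1}/\lambda _{n}$, and inserting this into the harmonic variation of a section $f(\cdot ,x^{2})$ yields $V_{H}^{\{1\}}(f)\leq \lambda _{1}V_{\Lambda }^{\{1\}}(f)$, and symmetrically $V_{H}^{\{2\}}(f)\leq \lambda _{1}V_{\Lambda }^{\{2\}}(f)$; both are finite since $f\in PBV_{\Lambda }$. (Only $\lambda _{n}/n\downarrow$ is used here, not the series condition.)

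The substance of the proof is the mixed term. Fix arbitrary finite systems of nonoverlapping intervals $\{I_{i}^{1}\}_{i=1}^{m}$ and $\{I_{j}^{2}\}_{j=1}^{n}$, listed in any order, and set $a_{ij}:=|f(I_{i}^{1}\times I_{j}^{2})|$. For $I_{j}^{2}=(a_{j}^{2},b_{j}^{2})$ put $u_{j}(x):=f(x,b_{j}^{2})-f(x,a_{j}^{2})$; then $a_{ij}=|u_{j}(I_{i}^{1})|$, and since $u_{j}$ is a difference of two sections of $f$ in the first variable, subadditivity of one-dimensional $\Lambda $-variation together with admissibility of the given interval system yields, for every $j$,
\[
\sum_{i=1}^{m}\frac{a_{ij}}{\lambda _{i}}\leq V_{\Lambda }(u_{j})\leq 2V_{\Lambda }^{\{1\}}(f)=:2A ,
\]
and, symmetrically, $\sum_{j=1}^{n}a_{ij}/\lambda _{j}\leq 2V_{\Lambda }^{\{2\}}(f)=:2B$ for every $i$.

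Next I would split the double sum along the diagonal, $\sum_{i,j}=\sum_{i\geq j}+\sum_{i<j}$, and treat the two pieces symmetrically. For $i\geq j$, monotonicity of $\mu _{k}:=\lambda _{k}/k$ gives $1/i=\mu _{i}/\lambda _{i}\leq \mu _{j}/\lambda _{i}$, so
\[
\sum_{j=1}^{n}\ \sum_{i\geq j}\frac{a_{ij}}{ij}\leq \sum_{j=1}^{n}\frac{1}{j}\cdot \frac{\lambda _{j}}{j}\sum_{i\geq j}\frac{a_{ij}}{\lambda _{i}}\leq 2A\sum_{j=1}^{\infty }\frac{\lambda _{j}}{j^{2}} ,
\]
which is finite by \eqref{lambda d=2}; likewise $\sum_{i<j}a_{ij}/(ij)\leq 2B\sum_{j}\lambda _{j}/j^{2}$. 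These bounds are uniform in the chosen interval systems, so taking suprema gives $V_{H,H}^{\{1,2\}}(f)\leq 2\big(\sum_{n}\lambda _{n}/n^{2}\big)PV_{\Lambda }(f)<\infty $, which, together with the first paragraph, completes the proof.

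The one genuinely non-routine point is the diagonal splitting combined with the substitution $1/i\leq \mu _{j}/\lambda _{i}$ valid for $i\geq j$: it trades the harmonic weight $1/i$, which $PBV_{\Lambda }$ does not control, for the controlled weight $1/\lambda _{i}$, at the cost of a factor $\mu _{j}=\lambda _{j}/j$ that is absorbed by the $1/j$ already present and produces exactly the series $\sum \lambda _{n}/n^{2}$. I expect the pitfall to avoid is the crude alternative of a dyadic block estimate using only the row and column sum bounds: that loses a logarithmic factor---the resulting double series $\sum_{p,q}\min(\alpha _{p},\beta _{q})$ can diverge even when $\sum_{p}\alpha _{p}$ and $\sum_{q}\beta _{q}$ converge---and fails to close. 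Statements 1) and 2) are the cited theorems of Dragoshanski and Bakhvalov and rest on different methods, which I do not treat here.
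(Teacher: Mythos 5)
Your proof of part 3) is correct, and it is essentially the specialization to $d=2$ (with $A_n\equiv 1$) of the argument the paper uses for Theorem \ref{t1}: the same split of the double sum along the diagonal (i.e.\ over the two orderings $i\geq j$ and $i<j$), followed by the same substitution that uses the monotonicity of $\lambda_n/n$ to trade the uncontrolled harmonic weight $1/i$ for $1/\lambda_i$ at the cost of the convergent series $\sum_n\lambda_n/n^2$, with the one-dimensional variations handled by $\lambda_n\leq\lambda_1 n$. Note that the paper itself offers no proof of Theorem A --- all three parts are quoted from the cited references --- so your deferral of parts 1) and 2) to Dragoshanski and Bakhvalov is appropriate, and the natural internal comparison is with the proof of Theorem \ref{t1}, which your argument matches.
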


Using the third statement of Theorem A, we have proved in \cite{GogSah} the
convergence of double Fourier series of functions of any class $PBV_{\Lambda
}$ with (\ref{lambda d=2}). To obtain similar result for higher dimensions
we need stronger result, since the inclusion $PBV_{\Lambda }\subset BV_{H}$
is not enough in this case (see next section for details).

\begin{theorem}
\label{t1} Let $\Lambda =\{\lambda _{n}\}_{n=1}^{\infty }$ and $d\geq 2$. If
\begin{equation}  \label{lambda}
\frac{\lambda _{n}}{n}\downarrow 0\quad \mathrm{{and}\quad
\sum\limits_{n=1}^{\infty }\frac{\lambda _{n}\log ^{d-2}n}{n^{2}}<\infty ,}
\end{equation}%
then there exists a sequence $\Gamma =\{\gamma _{n}\}_{n=1}^{\infty }$ with
\begin{equation}  \label{gamma0}
\gamma _{n}=o(n)\quad \mathrm{as}\quad n\rightarrow \infty,
\end{equation}
such that $PBV_{\Lambda }\subset BV_{\Gamma }$.
\end{theorem}

\begin{proof}
Choosing the sequence $\{A_n\}_{n=1}^\infty$ such that
\begin{equation}  \label{an}
A_n\uparrow\infty,\qquad \frac{\lambda_nA_n}n\downarrow0,\qquad
\sum\limits_{n=1}^{\infty }\frac{\lambda _{n}\log ^{d-2} n A_n^d}{n^{2}}%
<\infty,
\end{equation}
we set
\begin{equation}  \label{gamma}
\gamma_n=\frac n{A_n},\qquad n=1,2,\ldots
\end{equation}

We prove that there is a constant $C>0$ such that
\begin{equation}  \label{sup}
\sum_{i_{1},\ldots ,i_{p}}\frac{\left\vert f\left( I_{i_{1}}^{1}\times
\cdots \times I_{i_{p}}^{p},x^{\widetilde \alpha}\right) \right\vert }{%
\gamma _{i_{1}}\cdots \gamma _{i_{p}}}<C\cdot PV_{\Lambda }(f),
\end{equation}
for any $f\in PBV_{\Lambda }$ and $\alpha :=\{i_{1},...,i_{p}\}\subset D$, ${%
\{I_{i_{j}}^{j}\}_{i_j=1}^{k_j}\in \Omega}$.

To prove (\ref{sup}) observe, that
\begin{eqnarray}  \label{sum}
&&\sum_{i_{1},\ldots ,\,i_{p}}\frac{\left\vert f\left( I_{i_{1}}^{1}\times
\cdots \times I_{i_{p}}^{p},x^{\widetilde \alpha}\right) \right\vert }{%
\gamma _{i_{1}}\cdots \gamma _{i_{p}}} \\
&&= \sum_{\sigma }\sum_{i_{\sigma (1)}\leq \cdots \leq i_{\sigma (p)}}\frac{%
\left\vert f\left( I_{i_{1}}^{1}\times \cdots \times
I_{i_{p}}^{p},x^{\widetilde \alpha}\right) \right\vert }{\gamma
_{i_{1}}\cdots \gamma _{i_{p}}}<\infty ,  \notag
\end{eqnarray}
where the sum is taken over all rearrangements $\sigma =\{\sigma
(k)\}_{k=1}^{p}$ of the set $\{1,2,\ldots ,p\}$.

Denoting $M=PV_{\Lambda }(f)$ and using (\ref{gamma}), (\ref{an}) and (\ref%
{lambda}) we obtain:
\begin{eqnarray*}
&&\sum\limits_{i_{1}\leq i_{2}\leq \cdots \leq i_{p}}\frac{\left\vert
f\left( I_{i_{1}}^{1}\times \cdots \times I_{i_{p}}^{p},x^{\widetilde{\alpha
}}\right) \right\vert }{\gamma _{i_{1}}\cdots \gamma _{i_{p}}} \\
&=&\sum\limits_{i_{1}\leq i_{2}\leq \cdots \leq i_{p-1}}\frac{%
A_{i_{1}}\cdots A_{i_{p-1}}}{{i_{1}}\cdots {i_{p-1}}}\sum\limits_{i_{p}\geq
i_{p-1}}\frac{\left\vert f\left( I_{i_{1}}^{1}\times \cdots \times
I_{i_{p}}^{p},x^{\widetilde{\alpha }}\right) \right\vert }{\lambda _{i_{p}}}%
\cdot \frac{\lambda _{i_{p}}A_{i_{p}}}{i_{p}} \\
&\leq &M\sum\limits_{i_{1}\leq i_{2}\leq \cdots \leq i_{p-1}}\frac{%
A_{i_{p-1}}^{p}\lambda _{i_{p-1}}}{i_{p-1}^{2}}\cdot \frac{1}{{i_{1}}\cdots
i_{p-2}} \\
&=&M\sum\limits_{i_{i_{p}-1}=1}^{\infty }\frac{A_{i_{p-1}}^{p}\lambda
_{i_{p-1}}}{i_{p-1}^{2}}\sum\limits_{i_{p-2}=1}^{i_{p-1}}\frac{1}{i_{p-2}}%
\sum\limits_{i_{p-3}=1}^{i_{p-2}}\frac{1}{i_{p-3}}\cdots
\sum\limits_{i_{1}=1}^{i_{2}}\frac{1}{{i_{1}}} \\
&\leq &M\sum\limits_{i_{p-1}=1}^{\infty }\frac{A_{i_{p-1}}^{p}\lambda
_{i_{p-1}}}{i_{p-1}^{2}}\left( \sum\limits_{i_{=}1}^{i_{p-1}}\frac{1}{i}%
\right) ^{p-2}\leq C\cdot M\sum\limits_{n=1}^{\infty }\frac{A_{n}^{p}\lambda
_{n}\log ^{d-2}n}{n^{2}}<\infty .
\end{eqnarray*}%
Similarly we can prove that all other summands in the right hind side of (%
\ref{sum}) are finite. Theorem \ref{t1} is proved.
\end{proof}

In view of Theorem A, Theorem \ref{t1} implies

\begin{corollary}
\label{c1} If the sequence $\Lambda =\{\lambda _{n}\}_{n=1}^{\infty }$
satisfies (\ref{lambda}), then $PBV_{\Lambda }\subset CV_{H}$.
\end{corollary}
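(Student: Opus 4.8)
The plan is to deduce Corollary~\ref{c1} as an essentially immediate consequence of Theorem~\ref{t1} combined with part~2) of Theorem~A. First I would invoke Theorem~\ref{t1}: since the sequence $\Lambda=\{\lambda_n\}$ satisfies~(\ref{lambda}), there exists a sequence $\Gamma=\{\gamma_n\}_{n=1}^\infty$ with $\gamma_n=o(n)$ as $n\to\infty$ and $PBV_\Lambda\subset BV_\Gamma$. Second, I would invoke part~2) of Theorem~A (Bakhvalov), which states that $CV_H=\bigcup_\Gamma BV_\Gamma$, the union being taken over \emph{all} sequences $\Gamma=\{\gamma_n\}$ with $\gamma_n=o(n)$; in particular, for the specific $\Gamma$ produced by Theorem~\ref{t1} one has $BV_\Gamma\subset CV_H$. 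Chaining the two inclusions gives $PBV_\Lambda\subset BV_\Gamma\subset CV_H$, which is exactly the assertion.

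The only point that needs a word of care is the verification that such a sequence $\{A_n\}$ as in~(\ref{an}) actually exists, so that Theorem~\ref{t1} is genuinely applicable; but this is already subsumed in the proof of Theorem~\ref{t1} (one takes $A_n$ tending to infinity slowly enough, e.g. comparable to a slowly growing power of an iterated logarithm of the tail sum, so that multiplying the summand $\lambda_n\log^{d-2}n/n^2$ by $A_n^d$ preserves convergence, while $\lambda_nA_n/n$ still decreases to $0$). Since the excerpt permits me to assume everything stated earlier, including Theorem~\ref{t1} itself, no further construction is required here. There is no real obstacle: the corollary is a formal composition of two inclusions, and the substantive analytic work has all been carried out in Theorem~\ref{t1}.

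Thus the proof reduces to two lines: apply Theorem~\ref{t1} to obtain $\Gamma$ with $\gamma_n=o(n)$ and $PBV_\Lambda\subset BV_\Gamma$, then apply Theorem~A, part~2), to get $BV_\Gamma\subset CV_H$, and conclude $PBV_\Lambda\subset CV_H$.
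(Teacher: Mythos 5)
Your proposal is correct and matches the paper's own argument exactly: the authors derive Corollary~\ref{c1} by combining Theorem~\ref{t1} (which yields $PBV_\Lambda\subset BV_\Gamma$ for some $\Gamma$ with $\gamma_n=o(n)$) with part~2) of Theorem~A, giving $BV_\Gamma\subset CV_H$. Your additional remark about the existence of the sequence $\{A_n\}$ is a legitimate point but belongs to the proof of Theorem~\ref{t1}, not to the corollary itself.
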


\begin{definition}
The partial modulus of variation $v_{i}\left( n,f\right) $, $i=1,\ldots,d$
of a function $f$ are defined by
\begin{equation*}
v_{i}\left( n,f\right) :=\sup\limits_{x^{\beta }}\sup\limits_{\{I_{j}\}\in
\Omega _{n}}\sum\limits_{j=1}^{n}\left\vert f\left( I_{j},x^{\beta }\right)
\right\vert ,\quad \beta=D\setminus\{i\},\quad n=1,2,\ldots .
\end{equation*}
\end{definition}

For functions of one variable the concept of modulus of variation was
introduced by Chanturia \cite{Ch}.

\begin{theorem}
\label{t2} Let $f$ be defined on $T^{d}$ and%
\begin{equation}
\sum\limits_{j=1}^{\infty }\frac{\sqrt[d]{v_{i}\left( 2^{j},f\right) }}{%
2^{j/d}}<\infty ,\qquad i=1,...,d.  \label{var}
\end{equation}%
Then there exists a sequence $\Delta =\{\delta _{n}\}_{n=1}^{\infty }$ with
\begin{equation*}
\delta _{n}=o\left( n\right)\quad \mathrm{as}\quad n\to \infty,
\end{equation*}
such that $f\in BV_{\Delta }.$
\end{theorem}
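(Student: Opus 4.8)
The plan is similar in spirit to the proof of Theorem~\ref{t1}. Writing $u_i(m):=\sqrt[d]{v_i(2^m,f)}/2^{m/d}$, the hypothesis (\ref{var}) says exactly that $\sum_{m\ge 0}u_i(m)<\infty$ for each $i=1,\dots,d$; in particular $v_i(2^m,f)\le 2^m$ for all $i$ once $m\ge m_0$, and $f$ is bounded. Put $b_m:=\sum_{i=1}^d u_i(m)$, so $\sum_m b_m<\infty$. As in Theorem~\ref{t1}, the idea is to choose an auxiliary sequence $A_m\uparrow\infty$ growing slowly enough that $\sum_m A_m b_m<\infty$ and $2^m/A_m\to\infty$ (for instance $A_m:=\min\{1/\sqrt{R_m},\,2^{m/2}\}$ with $R_m:=\sum_{j\ge m}b_j$), and then set $\delta_n:=n/A_{\lfloor\log_2 n\rfloor}$. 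Then $\delta_n/n=1/A_{\lfloor\log_2 n\rfloor}\to 0$, so $\delta_n=o(n)$ and $\delta_n\to\infty$. It suffices to show $V^{\alpha}_{\Delta,\dots,\Delta}(f)<\infty$ for every $\alpha\subset D$, because then $V_\Delta(f)=\sum_{\alpha\subset D}V^{\alpha}_{\Delta,\dots,\Delta}(f)<\infty$.

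Fix $\alpha=\{j_1,\dots,j_p\}$, which after relabelling we take to be $\{1,\dots,p\}$, and fix $x^{\widetilde\alpha}$ together with collections $\{I^k_{i_k}\}_{i_k}$ for $k\le p$ (extending the sums to all positive integers only enlarges them). First I would split each index range into the dyadic blocks $B_m:=[2^m,2^{m+1})$ and estimate, for each multi-index $\vec m=(m_1,\dots,m_p)$, the block sum $\Sigma_{\vec m}:=\sum_{i_1\in B_{m_1},\dots,i_p\in B_{m_p}}|f(I^1_{i_1}\times\cdots\times I^p_{i_p},x^{\widetilde\alpha})|$. The crucial observation is that although $v_k$ only measures one-directional variation, it controls the mixed $p$-dimensional difference: expanding this difference with respect to the variables $x^{l}$, $l\ne k$, into an alternating sum of $2^{p-1}$ ordinary differences of $f$ in the single direction $x^k$ (taken at the various endpoint combinations of the $I^l_{i_l}$, with $x^{\widetilde\alpha}$ held fixed), and then summing over $i_k\in B_{m_k}$ — at most $2^{m_k}$ nonoverlapping intervals — and over the remaining $p-1$ block indices, the definition of $v_k(\cdot,f)$ yields $\Sigma_{\vec m}\le 2^{p-1}\bigl(\prod_{l\ne k}2^{m_l}\bigr)v_k(2^{m_k},f)$ for each $k=1,\dots,p$. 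Taking the geometric mean of these $p$ inequalities collapses the powers of $2$ and gives $\Sigma_{\vec m}\le 2^{p-1}\prod_{k=1}^p 2^{m_k}\bigl(v_k(2^{m_k},f)/2^{m_k}\bigr)^{1/p}$.

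It then remains to sum over the blocks. Since $\delta_{i_k}\ge 2^{m_k}/A_{m_k}$ when $i_k\in B_{m_k}$, the weighted sum $\sum_{i_1,\dots,i_p}|f(\cdots)|/(\delta_{i_1}\cdots\delta_{i_p})$ is at most $2^{p-1}\sum_{\vec m}\prod_k A_{m_k}\bigl(v_k(2^{m_k},f)/2^{m_k}\bigr)^{1/p}$, which factorizes into $2^{p-1}\prod_{k=1}^p\sum_{m\ge 0}A_m\bigl(v_k(2^m,f)/2^m\bigr)^{1/p}$. Here is where the $d$-th root in (\ref{var}) enters: since $v_k(2^m,f)/2^m\le 1$ for $m\ge m_0$ and $p\le d$, we have $\bigl(v_k(2^m,f)/2^m\bigr)^{1/p}\le\bigl(v_k(2^m,f)/2^m\bigr)^{1/d}=u_k(m)$, so each factor is, up to a finite initial segment, bounded by $\sum_m A_m u_k(m)\le\sum_m A_m b_m<\infty$. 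Thus $V^{\alpha}_{\Delta,\dots,\Delta}(f)<\infty$, uniformly in $x^{\widetilde\alpha}$ and in the collections, for every $\alpha\subset D$, so $f\in BV_\Delta$. (If a nondecreasing $\Delta$ is wanted, replacing $\delta_n$ by $\max_{k\le n}\delta_k$ keeps it $o(n)$ and only increases the weights.) Notice that the estimate is extremal in the genuinely $d$-dimensional term $p=d$; the lower-dimensional variations are then automatically under control.

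I expect the second paragraph to be the main obstacle: one has to recognize that the one-directional moduli $v_i$ can bound honest mixed $p$-dimensional differences at all. The expansion of a mixed difference into $2^{p-1}$ one-variable differences resolves the dimension mismatch, and balancing the $p$ resulting one-sided estimates by a geometric mean is precisely what puts the bound into a form governed by the $d$-th-root hypothesis (\ref{var}); arranging the exponents and the dyadic bookkeeping to match up is the only delicate point, the rest being routine.
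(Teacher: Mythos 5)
Your proof is correct, and its engine --- the dyadic block decomposition, the bound $\Sigma_{\vec m}\le 2^{p-1}\bigl(\prod_{l\ne k}2^{m_l}\bigr)v_k(2^{m_k},f)$ obtained by expanding the mixed difference into $2^{p-1}$ one-variable differences, and the geometric mean over the $p$ directions --- is exactly the computation the paper performs, though only for the top term $p=d$ (the display culminating in (\ref{p-var})). Where you genuinely diverge is in the treatment of the lower-dimensional variations $V^{\alpha}_{\Delta,\dots,\Delta}$ with $|\alpha|=p<d$: the paper disposes of them by induction on the dimension, importing the case $d=2$ from \cite{GogSah} together with Theorem A, and verifying directly only $\alpha=D$; you instead run the same block estimate for every $p$ and then pass from the resulting $p$-th roots to the $d$-th roots of the hypothesis via $(v_k(2^m,f)/2^m)^{1/p}\le (v_k(2^m,f)/2^m)^{1/d}$, valid once $v_k(2^m,f)\le 2^m$, which \eqref{var} forces for large $m$. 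This makes the argument self-contained (no appeal to the two-dimensional result or to $BV_H=CV_H=\bigcup_\Gamma BV_\Gamma$), and it also makes explicit two points the paper's induction leaves implicit: that the restricted functions $f(\cdot,x^{\widetilde\alpha})$ satisfy the $p$-dimensional hypothesis with $p$-th roots (which requires precisely your exponent observation), and that the $o(n)$ sequences arising at different levels can be merged into a single $\Delta$. Your explicit $A_m=\min\{1/\sqrt{R_m},\,2^{m/2}\}$ is a concrete instance of the sequence $B_{2^j}$ whose existence the paper merely asserts, and your closing remark about replacing $\delta_n$ by $\max_{k\le n}\delta_k$ takes care of the monotonicity convention (\ref{Lambda}).
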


\begin{proof}
We use induction on dimension $d$. We have proved in \cite{GogSah}, that in
the case $d=2$ the condition (\ref{var}) implies $f\in BV_{H}$, which
combined with Theorem A proves Theorem \ref{t2} for $d=2$.

Supposing Theorem \ref{t2} is true if the dimension is less than $d$, we
prove it for the dimension $d>2$.

According to induction hypothesis it is enough to prove that there exists a
sequence 
$\delta _{n}=o(n)$ such that
\begin{equation*}
\sup_{\{I_{i_{j}}^{j}\}_{i_j=1}^{k_j}\in\Omega}\ \sum_{i_{1},\ldots ,i_{d}}%
\frac{\left\vert f\left( I_{i_{1}}^{1}\times \cdots \times
I_{i_{d}}^{d}\right) \right\vert }{\delta _{i_{1}}\cdots \delta _{i_{d}}}%
<\infty .
\end{equation*}
Let the sequence $\{B_{2^{j}}\}_{j=1}^{\infty }$ be chosen so that
\begin{equation*}
B_{2^{j}}\uparrow \infty ,\qquad\sum\limits_{j=1}^{\infty }\frac{B_{2^{j}}%
\sqrt[d]{v_{i}\left( 2^{j},f\right) }}{2^{j/d}}<\infty ,\qquad i=1,...,d.
\end{equation*}%
Defining
\begin{equation*}
B_{n}=B_{2^{N}},\quad \mathrm{for}\quad 2^{N}\leq n<2^{N+1},\qquad
N=0,1,.....,
\end{equation*}
we set
\begin{equation}
\delta _{n}=\frac{n}{B_{n}},\qquad n=1,2\ldots  \label{delta}
\end{equation}
Then we can write
\begin{eqnarray}
&&\sum_{i_{1},\ldots ,i_{d}}\frac{\left\vert f\left( I_{i_{1}}^{1}\times
\cdots \times I_{i_{d}}^{d}\right) \right\vert }{\delta _{i_{1}}\cdots
\delta _{i_{d}}}  \label{d-var} \\
&=&\sum_{i_{1},\ldots ,i_{d}}B_{i_{1}}\cdots B_{i_{d}}\frac{\left\vert
f\left( I_{i_{1}}^{1}\times \cdots \times I_{i_{d}}^{d}\right) \right\vert }{%
i_{1}\cdots i_{d}}  \notag \\
&=&\sum\limits_{r_{1}=0}^{\infty }\cdots \sum\limits_{r_{d}=0}^{\infty
}\sum\limits_{i_{1}=2^{r_{1}}}^{2^{r_{1}+1}-1}\cdots
\sum\limits_{i_{d}=2^{r_{d}}}^{2^{r_{d}+1}-1}B_{i_{1}}\cdots B_{i_{d}}\frac{%
\left\vert f\left( I_{i_{1}}^{1}\times \cdots \times I_{i_{d}}^{d}\right)
\right\vert }{i_{1}\cdots i_{d}}  \notag \\
&\leq &\sum\limits_{r_{1}=0}^{\infty }\cdots \sum\limits_{r_{d}=0}^{\infty }%
\frac{B_{2^{r_{1}}}}{2^{r_{1}}}\cdots \frac{B_{2^{r_{d}}}}{2^{r_{d}}}%
\sum\limits_{i_{1}=2^{r_{1}}}^{2^{r_{1}+1}-1}\cdots
\sum\limits_{i_{d}=2^{r_{d}}}^{2^{r_{d}+1}-1}\left\vert f\left(
I_{i_{1}}^{1}\times \cdots \times I_{i_{d}}^{d}\right) \right\vert .  \notag
\end{eqnarray}

It is easy to show that%
\begin{eqnarray*}
&&\sum\limits_{i_{1}=2^{r_{1}}}^{2^{r_{1}+1}-1}\cdots
\sum\limits_{i_{d}=2^{r_{d}}}^{2^{r_{d}+1}-1}\left\vert f\left(
I_{i_{1}}^{1}\times \cdots \times I_{i_{d}}^{d}\right) \right\vert \\
&\leq &c\left( d\right) \prod\limits_{i\in \beta
}2^{r_{i}}\sup\limits_{x^{\beta }}\sup\limits_{\{I_{i_{k}}^{k}\}\in \Omega
_{2^{r_{k}}}}\sum\limits_{i_{k}=2^{r_{k}}}^{2^{r_{k}+1}-1}\left\vert f\left(
I_{i_{k}}^{k},x^{\beta }\right) \right\vert,
\end{eqnarray*}
where $\beta :=D\setminus\{k\}$, $k=1,...,d$.

Consequently,
\begin{eqnarray}
&&\sum\limits_{i_{1}=2^{r_{1}}}^{2^{r_{1}+1}-1}\cdots
\sum\limits_{i_{d}=2^{r_{d}}}^{2^{r_{d}+1}-1}\left\vert f\left(
I_{i_{1}}^{1}\times \cdots \times I_{i_{d}}^{d}\right) \right\vert
\label{p-var} \\
&=&\left[ \left( \sum\limits_{i_{1}=2^{r_{1}}}^{2^{r_{1}+1}-1}\cdots
\sum\limits_{i_{d}=2^{r_{d}}}^{2^{r_{d}+1}-1}\left\vert f\left(
I_{i_{1}}^{1}\times \cdots \times I_{i_{d}}^{d}\right) \right\vert \right)
^{1/d}\right] ^{d}  \notag \\
&\leq &c\left( d\right) \prod\limits_{k=1}^{d}2^{r_{k}\left( 1-1/d\right)
}\left( \sup\limits_{x^{\beta }}\sup\limits_{\{I_{i_{k}}^{k}\}\in \Omega
_{2^{r_{k}}}}\sum\limits_{i_{k}=2^{r_{k}}}^{2^{r_{k}+1}-1}\left\vert f\left(
I_{i_{k}}^{k},x^{\beta }\right) \right\vert \right) ^{1/d}  \notag \\
&=&c\left( d\right) \prod\limits_{k=1}^{d}2^{r_{k}\left( 1-1/d\right) }\sqrt[%
d]{v_{k}\left( 2^{r_{k}},f\right) }.  \notag
\end{eqnarray}

Combining (\ref{d-var}) and (\ref{p-var}) we obtain%
\begin{eqnarray*}
&&\sum_{i_{1},\ldots ,i_{d}}\frac{\left\vert f\left( I_{i_{1}}^{1}\times
\cdots \times I_{i_{d}}^{d}\right) \right\vert }{\delta _{i_{1}}\cdots
\delta _{i_{d}}} \\
&\leq &c\left( d\right) \sum\limits_{r_{1}=0}^{\infty }\cdots
\sum\limits_{r_{d}=0}^{\infty }\frac{B_{2^{r_{1}}}v_{1}\left(
2^{r_{1}},f\right) }{2^{r_{1}/d}}\cdots \frac{B_{2^{r_{d}}}v_{d}\left(
2^{r_{d}},f\right) }{2^{r_{d}}/d}<\infty .
\end{eqnarray*}

Theorem 2 is proved.
\end{proof}

\section{\protect\medskip Convergence of multiple Fourier series}

The Fourier series of function $f\in L^{1}\left( T^{d}\right)$ with respect
to the trigonometric system is the series
\begin{equation*}
S\left[ f\right] :=\sum_{n_{1},...,n_{d}=-\infty }^{+\infty }\widehat{f}%
\left( n_{1},....,n_{d}\right) e^{i\left( n_{1}x+\cdots +n_{d}x_{d}\right) },
\end{equation*}%
where
\begin{equation*}
\widehat{f}\left( n_{1},....,n_{d}\right) =\frac{1}{\left( 2\pi \right) ^{d}}%
\int_{T^{d}}f(x^{1},...,x^{d})e^{-i\left( n_{1}x^{1}+\cdots
+n_{d}x^{d}\right) }dx^{1}\cdots dx^{d}
\end{equation*}
are the Fourier coefficients of $f$. The rectangular partial sums are
defined as follows:
\begin{eqnarray*}
&&S_{N_{1},...,N_{d}}\left( f;x^{1},...,x^{d}\right) \\
&&:=\sum_{n_{1}=-N_{1}}^{N_{1}}\cdots \sum_{n_{d}=-N_{d}}^{N_{d}}\widehat{f}%
\left( n_{1},....,n_{d}\right) e^{i\left( n_{1}x^{1}+\cdots
+n_{d}x^{d}\right) } \\
&&=\frac 1{\pi^d}\int\limits_{T^{d}}f\left( x_{1},\cdots,x_{d}\right)
\prod\limits_{s=1}^{d}D_{N_s}\left( x_{s}\right) dx_{1}\cdots dx_{d},
\end{eqnarray*}
where $D_N(t)=\frac{sin\left(N+\frac12 \right)t}{2sin\frac t2}$ is the
Dirichlet kernel.

In this paper we consider convergence of \textbf{only rectangular partial
sums} (convergence in the sense of Pringsheim) of $d$-dimensional Fourier
series.

We denote by $C(T^{d})$ the space of continuous and $2\pi $-periodic with
respect to each variable functions with the norm
\begin{equation*}
\Vert f\Vert _{C}:=\sup_{\left( x^{1},\ldots,\,x^{d}\right) \in
T^{d}}|f(x^{1},\ldots,x^{d})|.
\end{equation*}

We say that the point $x:=\left( x^{1},\ldots,x^{d}\right) $ is \textit{a
regular point} of function $f$ if the following limits exist
\begin{equation*}
f\left( x^{1}\pm 0,...,x^{d}\pm 0\right)
:=\lim\limits_{t^{1},\ldots,\,t^{d}\downarrow 0}f\left( x^{1}\pm
t^{1},\ldots,x^{d}\pm t^{d}\right) .
\end{equation*}%
For the regular point $x:=\left( x^{1},\ldots,x^{d}\right) $ we denote%
\begin{equation}
f^{\ast }\left( x^{1},\ldots,x^{d}\right) :=\frac{1}{2^{d}}\sum f\left(
x^{1}\pm 0,\ldots,x^{d}\pm 0\right) .  \label{limit}
\end{equation}

\begin{definition}
We say that the class of functions $V \subset L^{1}(T^{d})$ is a class of
convergence on $T^{d}$, if for any function $f\in V $

1) the Fourier series of $f$ converges to $f^{\ast }({x})$ at any regular
point ${x}\in T^{d}$,

2) the convergence is uniform on any compact $K\subset T^{d}$, if $f$ is
continuous on the neighborhood of $K$.
\end{definition}

The well known Dirichlet-Jordan theorem (see \cite{Zy}) states that the
Fourier series of a function $f(x), \ x\in T$ of bounded variation converges
at every point $x$ to the value $\left[ f\left( x+0\right)
+f\left(x-0\right) \right] /2$. If $f$ is in addition continuous on $T$, the
Fourier series converges uniformly on $T$.

Hardy \cite{Ha} generalized the Dirichlet-Jordan theorem to the double
Fourier series and proved that $BV$ is a class of convergence on $T^{2}$.

The following theorem was proved by Waterman (for $d=1$) and Sahakian (for $%
d=2$).

\begin{Sah}[Waterman \protect\cite{W}, Sahakian \protect\cite{Saha}]
If $d=1$ or $d=2$, then the class $BV_{H}$ is a class of convergence on $%
T^{d}$.
\end{Sah}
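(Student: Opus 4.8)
The plan is to reproduce the arguments of Waterman ($d=1$) and Sahakian ($d=2$). Since $\lambda_n=n$ forces $BV_H\subset BV$, any $f\in BV_H$ lies in $L^1(T^d)$ and is bounded, so its Fourier coefficients and rectangular partial sums make sense. I would fix a regular point $x$ (and, for the uniform statement, let $x$ range over a compact $K$ on a neighbourhood of which $f$ is continuous). Folding the even Dirichlet kernels and subtracting $f^{\ast}(x)$ reduces the claim to
\begin{equation*}
R_N(x):=\frac1{\pi^{d}}\int_{(0,\pi)^{d}}\Phi_x(t)\prod_{s=1}^{d}D_{N_s}(t^{s})\,dt\longrightarrow 0\qquad(\min_s N_s\to\infty),
\end{equation*}
where $\Phi_x(t)=\sum_{\varepsilon\in\{\pm1\}^{d}}\bigl(f(x^{1}+\varepsilon_1t^{1},\dots,x^{d}+\varepsilon_d t^{d})-f(x^{1}+\varepsilon_1 0,\dots,x^{d}+\varepsilon_d 0)\bigr)$. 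Two properties of $\Phi_x$ will be used constantly: $\Phi_x(t)\to0$ as $t\to(0,\dots,0)$, and for each $\alpha\subset D$ the function $\Phi_x$ in the variables indexed by $\alpha$ (the rest fixed) has bounded harmonic variation with $V_H^{\alpha}(\Phi_x)\le C_d\,V_H(f)$. Replacing each $1/\bigl(2\sin(t^{s}/2)\bigr)$ by $1/t^{s}$ costs only a bounded kernel, killed by Riemann--Lebesgue, so I may work with $\prod_s\sin\bigl((N_s+\tfrac12)t^{s}\bigr)/t^{s}$.

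For $d=1$ I would split $\int_0^{\pi}=\int_0^{1/N}+\int_{1/N}^{h}+\int_h^{\pi}$ with $h$ small. The last integral tends to $0$ by Riemann--Lebesgue; the first is at most $2\sup_{0<t<1/N}|\Phi_x(t)|\to0$ since $|\sin((N+\tfrac12)t)/t|\le N+\tfrac12$. For the middle one, decompose $(1/N,h)$ into the consecutive intervals $I_k=(k\pi/\nu,(k+1)\pi/\nu)$, $\nu=N+\tfrac12$, on which $\sin(\nu t)$ keeps its sign ($k\lesssim Nh$), pair $I_{2j}$ with $I_{2j+1}$ (using that $t\mapsto t+\pi/\nu$ flips the sign and $1/t$ is monotone and slowly varying across a pair), and thereby bound $\bigl|\int_{1/N}^{h}\bigr|$ by $C\sum_{j}j^{-1}\bigl|\Phi_x(J_j)\bigr|$ over $\lesssim Nh$ nonoverlapping intervals $J_j\subset(0,h)$. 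Smallness then comes by fixing $J_0$: the $j\le J_0$ terms have $J_j\subset(0,J_0\pi/\nu)\to\{0\}$, so they are $o(1)$ as $N\to\infty$; for $j>J_0$, reordering $|\Phi_x(J_j)|$ decreasingly as $a_1\ge a_2\ge\cdots$ gives $\sum_{j>J_0}j^{-1}|\Phi_x(J_j)|\le J_0^{-1}\sum_{m\le J_0}a_m+\sum_{m>J_0}a_m/m$, and since $\sum_m a_m/m\le V_H(\Phi_x)<\infty$ with $a_m\downarrow$, both terms vanish as $J_0\to\infty$ (a dyadic estimate handles the averaging term). Sending $N\to\infty$, then $J_0\to\infty$, then $h\to0$ finishes it, uniformly on $K$ because there $\Phi_x\to0$ and $V_H(\Phi_x)$ is uniformly bounded.

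For $d=2$ I would partition $(0,\pi)^{2}$ into $(0,h)^{2}$, the two mixed strips $(0,h)\times(h,\pi)$, $(h,\pi)\times(0,h)$, and $(h,\pi)^{2}$. The last is $o(1)$ by a double Riemann--Lebesgue argument. On a mixed strip I would apply the one--dimensional estimate in $t^1$ with $t^2$ a parameter (using $V_H^{\{1\}}(\Phi_x)\le CV_H(f)$ uniformly in $t^2$), then integrate in $t^2$ against the $L^1$ oscillatory kernel $D_{N_2}$, getting $o(1)$ as $N_1\to\infty$, then $N_2\to\infty$, then $h\to0$. On $(0,h)^{2}$ I would run the sign--interval/pairing reduction in both variables at once, arriving at a bound $C\sum_{j_1,j_2}(j_1j_2)^{-1}\bigl|\Phi_x(J^{1}_{j_1}\times J^{2}_{j_2})\bigr|$ over rectangles from nonoverlapping intervals in $(0,h)$; this is controlled by the two--dimensional harmonic variation $V_{H,H}^{\{1,2\}}(\Phi_x)\le CV_H(f)$, and smallness is extracted by the reordering device applied first in $j_1$ (with $j_2$ fixed) and then in $j_2$, together with the vanishing of the mixed difference over the rectangles nearest the origin. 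Adding the four pieces gives $R_{N_1,N_2}\to0$ in the Pringsheim sense, uniformly on compacta of continuity.

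The hard part is the passage from the uniform \emph{boundedness} of these weighted sums (which is essentially the definition of $BV_H$) to their \emph{smallness}: harmonic variation supplies no convergent tail on its own, so one must combine the rearrangement bound $\sum_m a_m/m\le V_H$, the arithmetic fact that this forces $J_0^{-1}\sum_{m\le J_0}a_m\to0$, and the vanishing of $\Phi_x$ at the corner (plus localization) in order to beat the divergence of $\sum 1/j$. In two dimensions this extraction must be carried out in each variable while retaining uniform control in the other, and the region decomposition has to respect that $\Phi_x$ vanishes only as $t\to(0,0)$, not as a single $t^s\to0$; arranging everything so that the Pringsheim limits can be taken independently is where most of the effort goes.
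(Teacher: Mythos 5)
The paper does not prove this statement: Theorem WS is quoted from Waterman \cite{W} ($d=1$) and Sahakian \cite{Saha} ($d=2$), so there is no in-paper argument to compare yours with, and I can only assess your sketch on its own terms. Your overall strategy --- reduction to $\Phi_x$, sign-interval decomposition and pairing for the Dirichlet kernel, bounding by weighted sums $\sum_j j^{-1}|\Phi_x(J_j)|$, then a rearrangement argument --- is indeed the classical Waterman--Sahakian route. Two smaller slips first. The inclusion ``$\lambda_n=n$ forces $BV_H\subset BV$'' is backwards: since the weights $1/n$ only make the sums smaller, one has $BV\subset BV_H$, which is the whole point of the class; boundedness of $f\in BV_H$ follows instead from the single-interval term of the variation sum. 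And on the mixed strips in $d=2$ you cannot apply the one-dimensional estimate to $t^1\mapsto\Phi_x(t^1,t^2)$ directly, because this function does not tend to $0$ as $t^1\to0$ for fixed $t^2>h$; one must first subtract the trace $\Phi_x(0+,t^2)$ (handled by Riemann--Lebesgue in $t^2$) before the one-dimensional lemma applies, and then keep the resulting estimates uniform in $t^2$.

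The genuine gap sits exactly at the step you yourself flag as the hard part. You bound $\sum_{j>J_0}j^{-1}|\Phi_x(J_j)|\le J_0^{-1}\sum_{m\le J_0}a_m+\sum_{m>J_0}a_m/m$ and assert that both terms vanish as $J_0\to\infty$ ``since $\sum_m a_m/m\le V_H(\Phi_x)<\infty$.'' The first term is fine: the dyadic estimate gives $a_{2^k}\,k\le C\,V_H$, hence $J_0^{-1}\sum_{m\le J_0}a_m\le C\,V_H/\log J_0$, uniformly over all interval systems. But the second term is the tail of a series whose terms $a_m=a_m^{(N)}$ are the rearranged oscillations over an interval system that changes with $N$; since your limits are taken in the order $N\to\infty$ first, then $J_0\to\infty$, what you actually need is $\lim_{J_0\to\infty}\sup_{\{I_k\}\in\Omega}\sum_{m>J_0}a_m/m=0$, i.e.\ precisely that $\Phi_x$ is \emph{continuous in harmonic variation}. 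That is not a formal consequence of $V_H<\infty$ (a fixed convergent series has small tails, but here the series varies with $N$); it is a separate theorem --- for $d=2$ it is Dragoshanskii's result $BV_H=CV_H$, quoted as part 1) of Theorem A in this very paper --- and the analogous statement \emph{fails} for $d\ge3$, which is exactly why Bakhvalov showed $BV_H$ is not a class of convergence there. Your argument nowhere uses $d\le2$ at this step, so as written it would prove the false statement in all dimensions; that is a sure sign the step cannot be justified as stated. To close the gap you must either establish continuity in harmonic variation for $d\le 2$, or rework the pairing estimate so that only quantities controlled by $V_H$ together with the local smallness of $\Phi_x$ near the corner appear.
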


In \cite{Bakh1} Bakhvalov showed that the class $BV_{H}$ is not a class of
convergence on $T^{d}$, if $d>2$. On the other hand, he proved the following

\begin{Bakh}[Bakhvalov \protect\cite{Bakh1}]
\label{B} The class $CV_{H}$ is a class of convergence on $T^{d}$ for any $%
d=1,2,\ldots$
\end{Bakh}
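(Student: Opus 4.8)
The plan is to deduce Theorem B from the second statement of Theorem A together with a quantitative multidimensional Dirichlet--Jordan estimate. By that statement $CV_{H}=\bigcup_{\Gamma}BV_{\Gamma}$, the union being over all $\Gamma=\{\gamma_{n}\}$ with $\gamma_{n}=o(n)$; observe that $\gamma_{n}=o(n)$ forces $\sum_{n}1/\gamma_{n}=\infty$. Hence it suffices to show that every class $BV_{\Gamma}$ with $\gamma_{n}=o(n)$ is a class of convergence on $T^{d}$. So fix such a $\Gamma$, a function $f\in BV_{\Gamma}$, and a regular point $x=(x^{1},\dots,x^{d})$.

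First I would localize and split into quadrants. Writing $S_{N_{1},\dots,N_{d}}(f;x)=\pi^{-d}\int_{T^{d}}f(x+t)\prod_{s}D_{N_{s}}(t_{s})\,dt$ and decomposing $T^{d}$ into the $2^{d}$ sub-cubes obtained by taking $t_{s}\in[0,\pi]$ or $t_{s}\in[-\pi,0]$ in each coordinate, it is enough to treat one of them, say $[0,\pi]^{d}$ (the rest are identical after reflections). Since $\pi^{-d}\int_{[0,\pi]^{d}}\prod_{s}D_{N_{s}}(t_{s})\,dt=2^{-d}$, the required assertion on this sub-cube becomes
\[
\pi^{-d}\int_{[0,\pi]^{d}}\bigl(f(x^{1}+t_{1},\dots,x^{d}+t_{d})-f(x^{1}+0,\dots,x^{d}+0)\bigr)\prod_{s=1}^{d}D_{N_{s}}(t_{s})\,dt\longrightarrow 0 .
\]
The integrand $h(t)$ does not vanish when a single $t_{s}\to0$, so I would expand it by inclusion--exclusion over coordinates, $h(t)=\sum_{\emptyset\neq A\subseteq D}\Phi_{A}(t)$, where $\Phi_{A}$ is the mixed difference of $f(x+\cdot)$ taken between $0$ and $t_{i}$ in the coordinates $i\in A$, the remaining coordinates frozen at $x^{j}+0$. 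Each $\Phi_{A}$ depends only on $t_{A}=(t_{i})_{i\in A}$; because the definition of total bounded $\Gamma$-variation (Definition \ref{def1}) includes the index set $A$, the function $\Phi_{A}$ has bounded total $\Gamma$-variation in the variables $t_{A}$ with variation at most a constant multiple of $V_{\Gamma}(f)$; and, a mixed difference over an interval shrinking to a point tending to $0$, one has $\Phi_{A}(t_{A})\to0$ whenever any single $t_{i}\to0$ (with the uniformity in the other variables that is provided, à la Hardy and Waterman, by the bounded-variation structure and the regularity of $x$).

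The core is the following $p$-dimensional Dirichlet--Jordan lemma, to be proved by induction on $p$: if $\Phi$ on $[0,\pi]^{p}$ has bounded total $\Gamma$-variation $\le M$ and $\Phi(t)\to0$ as any one coordinate tends to $0$, then $\int_{[0,\pi]^{p}}\Phi(t)\prod_{i=1}^{p}D_{N_{i}}(t_{i})\,dt\to0$ as $\min_{i}N_{i}\to\infty$, with a bound of the shape $C_{p}\bigl(\omega(\delta)+M\,\eta(\delta,\min_{i}N_{i})\bigr)$ valid for every small $\delta>0$, where $\omega(\delta)=\sup\{|\Phi(t)|:\min_{i}t_{i}\le\delta\}$ and $\eta(\delta,N)\to0$ as $N\to\infty$. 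For $p=1$ this is Waterman's estimate for $\int_{0}^{\pi}\psi(t)D_{N}(t)\,dt$ with $\psi\in BV_{\Gamma}[0,\pi]$ and $\psi(0+)=0$: split $[0,\pi]=[0,\pi/N]\cup[\pi/N,\pi]$, bound the first part by $\pi\,\omega(\pi/N)$, and on $[\pi/N,\pi]$ perform a dyadic decomposition combined with summation by parts against the $\sim N$ sign-changes of $\sin\bigl((N+\tfrac12)t\bigr)$, the divergence of $\sum 1/\gamma_{n}$ making the resulting sum $o(1)$. For the inductive step I would integrate out one coordinate, apply the case $p=1$ uniformly in the remaining variables, and verify that the resulting function of the other $p-1$ variables again satisfies a bound of the same shape (bounded total $\Gamma$-variation, small near-zero supremum), so that the inductive hypothesis applies; Pringsheim independence of the $N_{i}$ causes no trouble since the final bound tends to $0$ as soon as $\min_{i}N_{i}\to\infty$.

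Applying this lemma to every $\Phi_{A}$ yields convergence of $S_{N_{1},\dots,N_{d}}(f;x)$ to $f^{\ast}(x)$ at each regular point $x$, i.e.\ part (1) of the definition of a class of convergence. For part (2), if $f$ is continuous on a neighbourhood of a compact $K$, then it is uniformly continuous on $K$, so $f^{\ast}=f$ on $K$ and the moduli $\omega(\delta)$ above are small uniformly in $x\in K$; since every other ingredient of the estimates depends on $x$ only through the fixed constant $V_{\Gamma}(f)$ and through these moduli, the convergence is uniform on $K$. I expect the main obstacle to be the inductive step of the $p$-dimensional lemma, namely controlling both the total $\Gamma$-variation and the near-zero behaviour of the $(p-1)$-variable function obtained after integrating one variable against a Dirichlet kernel. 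This is exactly the point at which the hypothesis $\gamma_{n}=o(n)$, strictly stronger than $\sum 1/\gamma_{n}=\infty$, must be used: for $p\le 2$ the lemma is available even for $\gamma_{n}=n$ (this is, in effect, Theorem WS), but for $p\ge 3$ it is known from Bakhvalov's counterexample ($\gamma_{n}=n$, $d\ge 3$) that $\sum 1/\gamma_{n}=\infty$ alone does not suffice, so the extra room in $\gamma_{n}=o(n)$ is genuinely needed to close the induction.
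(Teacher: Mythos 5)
This statement is quoted from Bakhvalov \cite{Bakh1}; the paper offers no proof of it, so your attempt can only be measured against the internal logic of your own argument. Your reduction via the second part of Theorem A (replace $CV_H$ by $\bigcup_\Gamma BV_\Gamma$ with $\gamma_n=o(n)$, then prove that each such $BV_\Gamma$ is a class of convergence), the quadrant splitting, and the inclusion--exclusion into mixed differences $\Phi_A$ anchored at $x+0$ form a sensible skeleton, and your observation that $\gamma_n=o(n)$ forces $\sum 1/\gamma_n=\infty$ correctly identifies the ingredient needed for the one-dimensional Waterman estimate.

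The proof nevertheless has a genuine gap, and you name it yourself: the inductive step of the $p$-dimensional Dirichlet--Jordan lemma is asserted, not proved. After integrating one coordinate against $D_{N_1}$, you must show that the resulting function of the remaining $p-1$ variables again has total $\Gamma$-variation bounded \emph{uniformly in $N_1$} and a controlled near-zero supremum; this requires an interval-summable version of the one-dimensional estimate, i.e.\ a bound on $\sum_k \bigl|\int_0^\pi \Phi(I_k\times\cdots,t_1)D_{N_1}(t_1)\,dt_1\bigr|/\gamma_{\,\cdot}$ over arbitrary collections of nonoverlapping intervals, and it is exactly here that the argument can fail. The failure is not hypothetical: for $\gamma_n=n$ the lemma holds for $p\le 2$ (Theorem WS) but is false for $p\ge 3$ (Bakhvalov's counterexample, cited in the paper just before Theorem B), so whatever closes the induction must consume the gap between $\sum 1/\gamma_n=\infty$ and $\gamma_n=o(n)$ in a quantitative way. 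Your sketch does not exhibit that mechanism; it only observes that it must exist. A secondary, smaller gap is the claimed uniformity of $\Phi_A(t_A)\to 0$ as a single $t_i\to 0$: regularity of $x$ gives this pointwise in the other variables, and upgrading it to the uniform modulus $\omega(\delta)$ your bound requires again needs the bounded-variation structure to be invoked explicitly rather than parenthetically. As it stands the proposal is an accurate roadmap to Bakhvalov's theorem, but the theorem's actual content --- the closable induction --- is missing.
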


Convergence of spherical and other partial sums of double Fourier series of
functions of bounded $\Lambda $-variation was investigated in deatails by
Dyachenko \cite{D1,D2}.

The main result of this paper is the following theorem, that  we have proved
in \cite{GogSah} for $d=2$.

\begin{theorem}
\label{t3} Let $\Lambda =\{\lambda _{n}\}_{n=1}^{\infty }$ and $d\geq 2$.

a) If
\begin{equation}  \label{lambda1}
\sum\limits_{n=1}^{\infty }\frac{\lambda _{n}\log ^{d-2}n}{n^{2}}<\infty ,
\end{equation}%
then $PBV_{\Lambda }$ is a class of convergence on $T^{d}$.

\medskip b) If
\begin{equation}  \label{lambda2}
\frac{\lambda _{n}}{n}=O\left( \frac{\lambda _{\lbrack n^{\delta }]}}{%
[n^{\delta }]}\right)
\end{equation}%
for some $\delta >1$, and
\begin{equation}  \label{lambda3}
\sum\limits_{n=1}^{\infty }\frac{\lambda _{n}\log ^{d-2}n}{n^{2}}=\infty ,
\end{equation}%
then there exists a continuous function $f\in PBV_{\Lambda }$, the Fourier
series of which diverges at $\left( 0,\ldots,0\right) .$
\end{theorem}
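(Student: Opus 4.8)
\textbf{Proof plan for Theorem \ref{t3}.}

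The plan is to treat the two parts by quite different mechanisms, reducing each to the $d=2$ case already established in \cite{GogSah} wherever possible.

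For part a), I would deduce it directly from Corollary \ref{c1} together with Theorem B. Indeed, condition \eqref{lambda1} is exactly the convergence-of-series part of \eqref{lambda}; the only extra hypothesis in \eqref{lambda} is the monotonicity assumption $\lambda_n/n\downarrow 0$. So the first step is to observe that one may replace an arbitrary $\Lambda$ satisfying \eqref{lambda1} by a comparable sequence $\Lambda'=\{\lambda'_n\}$ with $\lambda'_n/n$ decreasing to $0$ and $\lambda_n\le C\lambda'_n$, so that $PBV_{\Lambda}\subset PBV_{\Lambda'}$ and $\sum \lambda'_n\log^{d-2}n/n^2<\infty$ still holds; this is a routine regularization (take $\lambda'_n=n\sup_{m\ge n}\lambda_m/m$, and check the series bound via Abel summation / comparison). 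Then Corollary \ref{c1} gives $PBV_{\Lambda'}\subset CV_H$, and Theorem B says $CV_H$ is a class of convergence on $T^d$; hence so is $PBV_{\Lambda}$. I would spell out that ``class of convergence'' is a hereditary property under inclusion of function classes, which is immediate from the definition.

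For part b) the task is a construction, and here the $d=2$ result from \cite{GogSah} is the engine. The idea is to start from a continuous $g\in PBV_{\Lambda}$ (in the two-variable sense) whose double Fourier series diverges at the origin — this is precisely what \cite{GogSah} produces from \eqref{lambda3} with $d=2$, and the extra hypothesis \eqref{lambda2} is the Bakhvalov-type regularity condition (a $\delta>1$ with $\lambda_n/n=O(\lambda_{[n^\delta]}/[n^\delta])$) needed there to make the divergent construction work. Then I would lift this to dimension $d$ by setting
\begin{equation*}
f(x^1,\ldots,x^d):=g(x^1,x^2)\cdot\prod_{j=3}^{d}h(x^j),
\end{equation*}
where $h$ is a fixed smooth $2\pi$-periodic function with $h(0)\ne 0$, e.g. $h\equiv 1$ on a neighborhood of $0$ but arranged so that $f$ remains continuous and $2\pi$-periodic. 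Since $g\in PBV_\Lambda$ in two variables and $h$ is smooth, the partial $\Lambda$-variations $V_{\Lambda}^{\{i\}}(f)$ factor as products of a partial $\Lambda$-variation of $g$ (or of $h$, which is of bounded variation hence finite $\Lambda$-variation since $\lambda_n\ge1$) with $\|g\|_C$ or $\prod\|h\|_C$; so $f\in PBV_{\Lambda}$ on $T^d$. The rectangular partial sums also factor: $S_{N_1,\ldots,N_d}(f;x)=S_{N_1,N_2}(g;x^1,x^2)\prod_{j=3}^d S_{N_j}(h;x^j)$. Evaluating at $0$ and letting $N_j\to\infty$ for $j\ge3$ first (where $S_{N_j}(h;0)\to h(0)\ne0$ by smoothness of $h$), the divergence of $S_{N_1,N_2}(g;0,0)$ in the Pringsheim sense forces divergence of $S_{N_1,\ldots,N_d}(f;0,\ldots,0)$.

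The main obstacle is making the reduction in part b) genuinely rigorous: one must check that the two-variable divergent example of \cite{GogSah} is available under exactly hypotheses \eqref{lambda2}--\eqref{lambda3}, and that the tensor-product $f$ stays in the class $PBV_\Lambda$ \emph{as defined here for $d$ variables} — i.e. only the $d$ partial variations $V_\Lambda^{\{i\}}(f)$ must be controlled, not the full Hardy-type variation, which is what makes the product construction legitimate. A secondary point requiring care is continuity of $f$ at and near the origin together with $2\pi$-periodicity of each factor; choosing $h$ to be a fixed nonzero trigonometric polynomial (so $S_{N_j}(h;\cdot)=h$ for large $N_j$) removes any subtlety in the limit $N_j\to\infty$ and keeps $f$ manifestly continuous. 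Part a) presents no real obstacle beyond the regularization lemma, which is standard.
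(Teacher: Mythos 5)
Your part a) follows the paper's own route (Corollary \ref{c1} combined with Theorem B), with the added honesty of noticing that \eqref{lambda1} is weaker than \eqref{lambda}. However, the regularization you call routine is not: the sequence $\lambda'_n=n\sup_{m\ge n}(\lambda_m/m)$ need not satisfy $\sum\lambda'_n\log^{d-2}n/n^2<\infty$. For instance, with $N_k=2^{2^k}$ and $\lambda_n=N_k/k^2$ for $N_k\le n<N_{k+1}$ one has an increasing $\Lambda$ with $\sum\lambda_n/n^2\le\sum_k 2/k^2<\infty$, while $\sup_{m\ge n}(\lambda_m/m)\ge (k+1)^{-2}$ for all $n<N_{k+1}$ gives $\sum_n n^{-1}\sup_{m\ge n}(\lambda_m/m)\ge\sum_k(k+1)^{-2}\log(N_{k+1}/N_k)=\infty$. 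So this step either needs a different argument or the monotonicity of $\lambda_n/n$ must be assumed, which is what the paper implicitly does by invoking Corollary \ref{c1}.

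The fatal gap is in part b). Your tensor product $f=g(x^1,x^2)\prod_{j\ge 3}h(x^j)$ would indeed lie in $PBV_{\Lambda}(T^d)$ and inherit divergence at the origin \emph{if} the two-dimensional example $g$ existed, but the result of \cite{GogSah} produces a continuous $g\in PBV_{\Lambda}(T^2)$ with divergent double Fourier series only under $\sum\lambda_n/n^2=\infty$, i.e.\ under \eqref{lambda3} read with $d=2$. For $d>2$ the hypothesis \eqref{lambda3} is strictly weaker because of the factor $\log^{d-2}n$. Concretely, take $\lambda_n=n/\log^{d-1}n$ with $d\ge 3$ (the sharp case in Corollary \ref{c3}b)): conditions \eqref{lambda2} and \eqref{lambda3} hold, yet $\sum\lambda_n/n^2=\sum 1/(n\log^{d-1}n)<\infty$, so by part a) of the two-dimensional theorem $PBV_{\Lambda}$ is a class of convergence on $T^2$ and no divergent $g$ exists; your construction cannot start exactly where the theorem has content. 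The extra logarithms in \eqref{lambda3} come from a genuinely $d$-dimensional mechanism: in the paper's proof the functions $f_N$ are supported on the index set $W$ in which $d-1$ coordinates range over $(i_d,\,i_d+m_{i_d})$, and the $(d-1)$-fold iterated harmonic sums produce the factor $\log^{d-1}\bigl((i_d+m_{i_d})/i_d\bigr)$ in the lower bound for $S_{N,\ldots,N}(f_N;0,\ldots,0)$, after which Banach--Steinhaus is applied. No product of a two-variable example with smooth one-variable factors can reproduce this growth, so part b) requires the direct $d$-dimensional construction rather than a reduction to $d=2$.
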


\begin{proof}[Proof of Theorem 2]
Part a) immediately follows from Corollary \ref{c1} and Theorem B.

To prove part b) we denote
\begin{equation*}
A_{i_{1},\ldots,i_{d}}:=\left[ \frac{\pi i_{1}}{N+1/2},\frac{\pi \left(
i_{1}+1\right) }{N+1/2}\right) \times \cdots \times \left[ \frac{\pi i_{d}}{%
N+1/2},\frac{\pi \left( i_{d}+1\right) }{N+1/2}\right) ,
\end{equation*}
\medskip
\begin{equation*}
W :=\left\{ (i_{1},\ldots ,i_{d}):i_{d}<i_{s}<i_{d}+m_{i_{d}},\ 1\leq s<d,\
1\leq i_{d}\leq N_{\delta }\right\} ,\quad
\end{equation*}
\medskip
\begin{equation*}
N_{\delta } =\left[ \left( \frac{N}{2}\right) ^{\frac{1}{\delta }}\right],
\qquad t_{j}:=\left( \sum\limits_{i=1}^{m_{j}}\frac{1}{\lambda _{i}}%
\right)^{-1},\qquad m_{j}:=\left[ j^{\delta }\right],
\end{equation*}
where $[x]$ is the integer part of $x$.

It is not hard to see, that for any sequence $\Lambda =\{\lambda _{n}\}$
satisfying (\ref{Lambda}) the class $C(T^{d})\cap PBV_{\Lambda }$ is a
Banach space with the norm
\begin{equation*}
\Vert f\Vert _{PBV_{\Lambda }}:=\Vert f\Vert _{C}+PV_{\Lambda }(f).
\end{equation*}

Consider the following function%
\begin{equation*}
f_{N}\left( x_{1},\ldots,x_{d}\right) :=\sum\limits_{\left(
i_{1},\ldots,\,i_{d}\right) \in W}t_{i_{d}}1_{A_{i_{1},\ldots,i_{d}}}\left(
x_{1},\ldots,x_{d}\right) \prod\limits_{s=1}^{d}\sin \left( N+1/2\right)
x_{s},
\end{equation*}%
where $1_{A}\left( x_{1},\ldots,x_{d}\right) $ is the characteristic
function of the set $A\subset T^{d}$.

Let $\left( i_{1},\ldots ,i_{k-1},i_{k+1},\ldots ,i_{d}\right) $ be fixed $%
(k=1,\ldots ,d-1)$. Then it is easy to show that
\begin{equation*}
V_{\Lambda }^{{k}}\left( f_{N}\right) \leq C\cdot t_{i_{d}}\left(
\sum\limits_{i_{k}=i_{d}+1}^{i_{d}+m_{i_{d}}}\frac{1}{\lambda _{i_{k}-i_{d}}}%
\right) \leq C\cdot t_{i_{d}}\left( \sum\limits_{i_{k}=1}^{m_{i_{d}}}\frac{1%
}{\lambda _{i_{k}}}\right) \leq C<\infty .
\end{equation*}%
If $\left( i_{1},\ldots ,i_{d-1}\right) $ is fixed, the condition $\left(
i_{1},\ldots ,i_{d}\right) \in W$ implies
\begin{equation*}
\max \left\{ i_{d}\left( i_{s}\right) :1\leq s\leq d-1\right\} <i_{d}<\min
\left\{ i_{s}:1\leq s\leq d-1\right\} ,
\end{equation*}%
where%
\begin{equation*}
i_{d}\left( i_{s}\right) :=\min \left\{ i_{d}:i_{d}+m_{i_{d}}>i_{s}\right\} .
\end{equation*}%
Consequently, by the definition of the function $f_{N}$ we obtain that for
any $s=1,...,d-1$
\begin{eqnarray*}
V_{\Lambda }^{{d}}\left( f_{N}\right)  &\leq
&C\sum\limits_{i_{d}=i_{d}\left( i_{s}\right) +1}^{i_{s}}\frac{t_{i_{d}}}{%
\lambda _{i_{d}-i_{d}\left( i_{s}\right) }} \\
&\leq &C\cdot t_{i_{d}\left( i_{s}\right) }\sum\limits_{i_{d}=i_{d}\left(
i_{s}\right) +1}^{i_{s}}\frac{1}{\lambda _{i_{d}-i_{d}\left( i_{s}\right) }}
\\
&=&C\cdot t_{i_{d}\left( i_{s}\right)
}\sum\limits_{i_{d}=1}^{i_{s}-i_{d}\left( i_{s}\right) }\frac{1}{\lambda
_{i_{d}}}\leq C\cdot t_{i_{d}\left( i_{s}\right)
}\sum\limits_{i_{d}=1}^{m_{i_{d}\left( i_{s}\right) }}\frac{1}{\lambda
_{i_{d}}}=C<\infty .
\end{eqnarray*}%
Hence $f_{N}\in PBV_{\Lambda }$ and
\begin{equation}
\Vert f_{N}\Vert _{PV_{\Lambda }}\leq C,\quad N=1,2,\ldots .  \label{ineq1}
\end{equation}%
Observe, that by (\ref{lambda2}) we have%
\begin{equation*}
\frac{1}{t_{j}}=\sum_{i=1}^{m_{j}}\frac{1}{\lambda _{i}}=\sum_{i=1}^{m_{j}}%
\frac{1}{i}\cdot \frac{i}{\lambda _{i}}\leq C\frac{m_{j}}{\lambda _{m_{j}}}%
\log m_{j}\leq C\frac{j\log j}{\lambda _{j}}.
\end{equation*}%
Hence%
\begin{equation*}
t_{j}\log j\geq c\frac{\lambda _{j}}{j}.
\end{equation*}%
Consequently,
\begin{eqnarray}
&&\pi ^{d}S_{N,\cdots ,N}\left( f_{N};0,\cdots ,0\right)   \label{ineq3} \\
&=&\int\limits_{T^{d}}f_{N}\left( x^{1},\cdots ,x^{d}\right)
\prod\limits_{s=1}^{d}D_{N}\left( x^{s}\right) dx^{1}\cdots dx^{d}  \notag
\label{ineq2} \\
&=&\sum\limits_{\left( i_{1},\cdots ,i_{d}\right) \in
W}t_{i_{d}}\int\limits_{A_{i_{1},\cdots ,\,i_{d}}}\prod\limits_{s=1}^{d}%
\frac{\sin ^{2}\left( N+1/2\right) x^{s}}{2\sin \left( x^{s}/2\right) }%
dx^{1}\cdots dx^{d}  \notag \\
&\geq &c\sum\limits_{\left( i_{1},\cdots ,\,i_{d}\right) \in W}t_{i_{d}}%
\frac{1}{i_{1}\cdots i_{d}}  \notag \\
&\geq &c\sum\limits_{i_{d}=1}^{N_{\delta }}\frac{t_{i_{d}}}{i_{d}}%
\sum\limits_{i_{1}=i_{d}}^{i_{d}+m_{i_{d}}}\cdots
\sum\limits_{i_{d-1}=i_{d}}^{i_{d}+m_{i_{d}}}\frac{1}{i_{1}\cdots i_{d-1}}
\notag \\
&\geq &c\sum\limits_{i_{d}=1}^{N_{\delta }}\frac{t_{i_{d}}}{i_{d}}\log
^{d-1}\left( \frac{i_{d}+m_{i_{d}}}{i_{d}}\right)   \notag \\
&\geq &c(\delta -1)^{d-1}\sum\limits_{i_{d}=1}^{N_{\delta }}\frac{%
t_{i_{d}}\log i_{d}}{i_{d}}\log ^{d-2}i_{d}  \notag \\
&\geq &c(\delta -1)^{d-1}\sum\limits_{n=1}^{N_{\delta }}\frac{\lambda
_{n}\log ^{d-2}n}{n^{2}}\rightarrow \infty ,  \notag
\end{eqnarray}%
as $N\rightarrow \infty $, according to (\ref{lambda3}).

By Banach-Steinhaus Theorem, (\ref{ineq1}) and (\ref{ineq3}) imply the
existence of a continuous function $f\in PBV_{\Lambda }$ such that
\begin{equation*}
\sup_{N}|S_{N,\cdots,N}[f,(0,\cdots,0)]|=\infty .
\end{equation*}
\end{proof}

\begin{corollary}
\label{c3} a) If $\Lambda =\left\{ \lambda _{n}\right\} _{n=1}^{\infty }$
with
\begin{equation*}
\lambda _{n}=\frac{n}{\log ^{d-1+\varepsilon }n},\qquad n=1,2,\ldots
\end{equation*}%
for some $\varepsilon >0$, then the class $PBV_{\Lambda }$ is a class of
convergence on $T^{d}$.

\medskip b) If $\Lambda =\left\{ \lambda _{n}\right\} _{n=1}^{\infty }$ with
\begin{equation*}
\lambda _{n}=\frac{n}{\log ^{d-1}n},\qquad n=1,2,\ldots ,
\end{equation*}%
then the class $PBV_{\Lambda }$ is not a class of convergence on $T^{d}$.
\end{corollary}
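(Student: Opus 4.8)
The plan is to derive Corollary~\ref{c3} directly from Theorem~\ref{t3}, the two parts being the two endpoints of the logarithmic scale determined by the condition $\sum_n \lambda_n \log^{d-2}n / n^2 < \infty$.

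\medskip

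For part a), with $\lambda_n = n/\log^{d-1+\varepsilon}n$, I would simply substitute into the series in (\ref{lambda1}): the general term becomes
\begin{equation*}
\frac{\lambda_n \log^{d-2}n}{n^2} = \frac{1}{n\log^{1+\varepsilon}n},
\end{equation*}
and $\sum_n 1/(n\log^{1+\varepsilon}n)$ converges for every $\varepsilon>0$ by the integral test (or Cauchy condensation). Hence (\ref{lambda1}) holds and Theorem~\ref{t3}a) gives that $PBV_\Lambda$ is a class of convergence on $T^d$. One should also remark, for completeness, that this $\Lambda$ satisfies the standing normalization (\ref{Lambda}): $\lambda_n \to \infty$ and $\lambda_n$ is eventually increasing (one can replace the first few terms or pass to an equivalent increasing sequence without changing the class $PBV_\Lambda$, since only the tail behaviour matters).

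\medskip

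For part b), with $\lambda_n = n/\log^{d-1}n$, the same substitution yields general term $1/(n\log n)$, so $\sum_n \lambda_n \log^{d-2}n/n^2 = \sum_n 1/(n\log n) = \infty$, which is (\ref{lambda3}). It remains to check the regularity hypothesis (\ref{lambda2}): I need a $\delta>1$ with $\lambda_n/n = O\bigl(\lambda_{[n^\delta]}/[n^\delta]\bigr)$. Here $\lambda_n/n = 1/\log^{d-1}n$ and $\lambda_{[n^\delta]}/[n^\delta] = 1/\log^{d-1}[n^\delta] \sim 1/(\delta^{d-1}\log^{d-1}n)$, so the ratio $\frac{\lambda_n/n}{\lambda_{[n^\delta]}/[n^\delta]} \to \delta^{d-1}$, a finite constant; thus (\ref{lambda2}) holds for any $\delta>1$ (say $\delta=2$). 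Therefore Theorem~\ref{t3}b) applies and produces a continuous $f\in PBV_\Lambda$ whose Fourier series diverges at the origin, so $PBV_\Lambda$ is not a class of convergence on $T^d$.

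\medskip

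There is no real obstacle here — the corollary is a plug-in of explicit sequences into the dichotomy of Theorem~\ref{t3}. The only points requiring the slightest care are the convergence/divergence of the Bertrand-type series $\sum 1/(n\log^{1+\varepsilon}n)$ versus $\sum 1/(n\log n)$, and the verification that the chosen $\Lambda$ meets the structural assumptions (\ref{Lambda}) and, in part b), (\ref{lambda2}); both are routine asymptotic estimates.
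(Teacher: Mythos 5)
Your derivation is correct and is exactly the intended one: the paper states Corollary~\ref{c3} as an immediate consequence of Theorem~\ref{t3}, and your substitutions verifying that $\lambda_n=n/\log^{d-1+\varepsilon}n$ gives the convergent Bertrand series $\sum 1/(n\log^{1+\varepsilon}n)$ in (\ref{lambda1}), while $\lambda_n=n/\log^{d-1}n$ gives the divergent $\sum 1/(n\log n)$ together with (\ref{lambda2}), are precisely the required checks. Your added remarks on the normalization (\ref{Lambda}) and on $\lambda_n/n\downarrow 0$ (needed since the proof of Theorem~\ref{t3}a) goes through Corollary~\ref{c1}) are a welcome bit of extra care, not a deviation.
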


The second part of Theorem \ref{t2} and Corollary \ref{c1} imply

\begin{corollary}
\label{c2} If the sequence $\Lambda =\{\lambda _{n}\}_{n=1}^{\infty }$
satisfies (\ref{lambda2}) and (\ref{lambda3}), then $PBV_{\Lambda
}\not\subset CV_{H}$.
\end{corollary}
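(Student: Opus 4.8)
The plan is to obtain Corollary \ref{c2} as an immediate consequence of part b) of Theorem \ref{t3} together with Theorem B, without any further computation: a function whose Fourier series diverges at a point automatically fails to lie in a class of convergence, and by Theorem B that class is $CV_H$. So first I would apply part b) of Theorem \ref{t3}: since $\Lambda=\{\lambda_n\}$ satisfies (\ref{lambda2}) and (\ref{lambda3}), there exists a \emph{continuous} function $f\in PBV_\Lambda$ whose $d$-dimensional Fourier series diverges at the origin; in fact the Banach--Steinhaus argument in the proof of Theorem \ref{t3} produces such an $f$ with $\sup_N\bigl|S_{N,\ldots,N}[f,(0,\ldots,0)]\bigr|=\infty$.

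Next I would argue by contradiction. Assume $PBV_\Lambda\subset CV_H$, so the function $f$ above belongs to $CV_H$. Since $f$ is continuous on $T^d$, every point is a regular point of $f$ and $f^\ast(0,\ldots,0)=f(0,\ldots,0)$. By Theorem B the class $CV_H$ is a class of convergence on $T^d$, hence the Fourier series of $f$ converges (in the Pringsheim sense) to $f^\ast(0,\ldots,0)$ at the regular point $(0,\ldots,0)$; in particular the diagonal partial sums $S_{N,\ldots,N}[f,(0,\ldots,0)]$ form a convergent, hence bounded, sequence. This contradicts the divergence established in the previous step, so $f\in PBV_\Lambda\setminus CV_H$, and therefore $PBV_\Lambda\not\subset CV_H$.

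There is no genuine obstacle here: the substantive work is entirely contained in Theorem \ref{t3}~b) --- the construction of the functions $f_N$, the uniform norm bound (\ref{ineq1}), and the lower estimate (\ref{ineq3}) forcing $\pi^dS_{N,\ldots,N}(f_N;0,\ldots,0)\to\infty$ under (\ref{lambda3}). The only point worth noting is the trivial observation that the divergence occurs at $(0,\ldots,0)$, which is a regular point of the \emph{continuous} function $f$, so that the definition of class of convergence (and hence Theorem B) genuinely applies; equivalently, one may phrase the conclusion as saying that $f$ witnesses that $PBV_\Lambda$ is not contained in any class of convergence on $T^d$, in particular not in $CV_H$.
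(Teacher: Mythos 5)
Your proof is correct and follows essentially the same route as the paper, which derives Corollary \ref{c2} directly from part b) of Theorem \ref{t3} together with Theorem B (the paper's lead-in citing ``Theorem \ref{t2} and Corollary \ref{c1}'' is evidently a typo for these). Your contradiction argument, including the observation that the origin is a regular point of the continuous function $f$, is exactly the intended justification.
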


Theorem \ref{t2} and Theorems A and B imply

\begin{theorem}
The set of functions
\begin{equation*}
\left\{ f:\sum\limits_{j=0}^{\infty }\frac{\sqrt[d]{v_{i}\left(
2^{j},f\right) }}{2^{j/d}}<\infty ,\ i=1,...,d\right\}
\end{equation*}%
is a class of convergence on $T^{d}$.
\end{theorem}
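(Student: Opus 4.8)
The plan is to recognize the given class as a subclass of $CV_{H}$ and then invoke Theorem B. There is no new analytic work to do: the substantive estimates were already carried out in the proof of Theorem \ref{t2}, and what remains is a short chain of inclusions.

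First I would take an arbitrary $f$ from the set, i.e.\ a function satisfying $\sum_{j=0}^{\infty }\sqrt[d]{v_{i}(2^{j},f)}/2^{j/d}<\infty$ for every $i=1,\dots ,d$. This is exactly hypothesis (\ref{var}), so Theorem \ref{t2} applies and produces a sequence $\Delta =\{\delta _{n}\}$ with $\delta _{n}=o(n)$ as $n\to\infty$ and $f\in BV_{\Delta }$. Next I would appeal to part 2) of Theorem A (Bakhvalov): $CV_{H}=\bigcup _{\Gamma }BV_{\Gamma }$, the union being over all positive sequences $\Gamma =\{\gamma _{n}\}$ with $\gamma _{n}=o(n)$. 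Since $\Delta $ is one such sequence, $f\in BV_{\Delta }\subset CV_{H}$. As $f$ was arbitrary, the whole set is contained in $CV_{H}$.

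Finally, by Theorem B the class $CV_{H}$ is a class of convergence on $T^{d}$ for every $d$. The two defining requirements of a ``class of convergence'' (convergence of the Fourier series to $f^{\ast }(x)$ at each regular point, and uniform convergence on compacts on which $f$ is continuous in a neighbourhood) are properties imposed on each member function separately, hence they pass automatically to any subclass. Therefore the set displayed in the statement, being a subclass of $CV_{H}$, is itself a class of convergence on $T^{d}$. I do not expect any genuine obstacle here; the only point deserving a moment's care is the bookkeeping of the induction in Theorem \ref{t2} (the base case $d\le 2$ and possible degeneracies when some $v_{i}$ vanish), but this has already been dealt with there and does not affect the present argument.
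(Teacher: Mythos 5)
Your argument is exactly the paper's: the theorem is stated with the one-line justification ``Theorem \ref{t2} and Theorems A and B imply,'' which is precisely your chain (Theorem \ref{t2} gives $f\in BV_{\Delta}$ with $\delta_{n}=o(n)$, Theorem A part 2 gives $BV_{\Delta}\subset CV_{H}$, and Theorem B makes $CV_{H}$ a class of convergence). The proposal is correct and takes the same route.
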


\begin{corollary}
The set of functions
\begin{equation*}
\left\{ f:v_{i}\left( n,f\right) =O\left( n^{\alpha }\right),\
i=1,...,d\right\}
\end{equation*}%
is a class of convergence on $T^{d}$ for any $\alpha\in(0,1)$.
\end{corollary}

\end{document}